\newcommand{\lr}{\longrightarrow}
\newcommand{\la}{\llangle}
\newcommand{\ra}{\rrangle}
\theoremstyle{plain}
\newtheorem{theorem}{Theorem}[section]
\newtheorem*{theorem*}{Theorem}
\newtheorem{lemma}[theorem]{Lemma}
\newtheorem{rem}[theorem]{Remark}
\newtheorem*{mt*}{Main Theorem}
\newcommand\C{{\mathbb C}}
\newcommand\N{{\mathbb N}}
\newcommand\R{{\mathbb R}}
\newcommand{\de}[2]{\frac{\partial #1}{\partial #2}}
\newcommand{\del}{\partial}
\newcommand{\delbar}{\overline{\del}}
\newcommand{\Cinf}{$\mathcal{C}^\infty$}
\DeclareMathOperator{\supp}{supp}
\let\sup\undefined
\DeclareMathOperator*{\sup}{sup\vphantom{p}}
\DeclareMathOperator{\vol}{Vol_g}
\DeclareMathOperator{\sgn}{sgn}
\DeclareMathOperator{\real}{Re}
\title{Bott-Chern Harmonic Forms on Stein Manifolds}
\author{Riccardo Piovani and Adriano Tomassini}
\address{Dipartimento di Scienze Matematiche, Fisiche ed Informatiche\\
Unit\`{a} di Matematica e Informatica\\
Universit\`{a} degli Studi di Parma\\
Parco Area delle Scienze 53/A, 43124\\
Parma, Italy}
\email{riccardo.piovani@studenti.unipr.it, adriano.tomassini@unipr.it}
\keywords{Bott-Chern harmonic form; Stein manifold; $d$-bounded}
\thanks{This work was partially supported by the Project PRIN ``Varietà reali e complesse: geometria, topologia e analisi armonica'' 
and by GNSAGA of INdAM}
\subjclass[2010]{32Q15, 32Q28}
\begin{document}
\maketitle

\begin{abstract}
Let $M$ be an $n$-dimensional $d$-{\em bounded Stein manifold $M$}, i.e., a complex 
$n$-dimensional manifold $M$ admitting a smooth strictly plurisubharmonic exhaustion $\rho$ and endowed with the K\"ahler metric whose fundamental 
form is $\omega=i\del\delbar\rho$, such that $i\delbar\rho$ has bounded $L^\infty$ norm. We prove a vanishing result for $W^{1,2}$ harmonic forms  
with respect to the Bott-Chern Laplacian on $M$.
\end{abstract}

\section{Introduction}
Let $M$ be an $n$-dimensional complex manifold endowed with a Hermitian metric $g$. Then, on the space of $(p,q)$-forms on $M$ there are defined 
several self-adjoint elliptic differential operators of order two, as the {\em Dolbeault Laplacian} $\Delta^g_{\delbar}$ and of order four as 
{\em Bott-Chern} and 
{\em Aeppli Laplacians}, respectively denoted by $\tilde{\Delta}^g_{BC}$ and $\tilde{\Delta}^g_{A}$, involving $\del,\delbar,\del^*,\delbar^*$ and their 
suitable combinations. If $M$ is compact, then, in particular, according to Schweitzer \cite{S}, it  turns out that a 
$(p,q)$-form $\varphi$ on $M$ satisfies $\tilde{\Delta}^g_{BC}\varphi=0$ 
if and only if 
$$\del\varphi=0,\quad\delbar\varphi=0,\quad \delbar^*\del^*\varphi=0.
$$
Furthermore, if the Hermitian metric $g$ is also K\"ahler on the compact complex manifold $M$, then according to the ellipticity, the kernels 
of such differential 
operators are finite dimensional and, as a consequence of K\"ahler identities, they coincide. 

For a complex non compact manifold $M$, one can consider smooth $L^2$ forms and study the space of $L^2$ harmonic $(p,q)$-forms for the above Laplacians. 
In \cite{DF}, the space of $L^2$ harmonic forms with respect to the Dolbeault Laplacian $\Delta^g_{\delbar}$ on a bounded strictly 
pseudoconvex domain $\Omega$ in $\C^n$ with 
smooth boundary, endowed with the Bergman metric, is studied. Precisely, denoting by $\mathcal{H}^{p,q}_{\delbar,2}$ the space 
of $L^2$ harmonic forms with respect to $\Delta^g_{\delbar}$, Donnelly and Fefferman proved that under the assumptions as above, it 
holds that
$$\left\{
\begin{array}{lll}
\dim\mathcal{H}^{p,q}_{\delbar,2}=0 &\hbox{if}&\,\,\, p+q\neq n\\[10pt]
\dim\mathcal{H}^{p,q}_{\delbar,2}=\infty&\hbox{if}&\,\,\, p+q= n.
\end{array}
\right.
$$
In \cite{O} Ohsawa proved that the dimension of the middle $L^2$ $\delbar$-cohomology of a domain in a complex manifold, admitting 
a non-degenerate regular boundary point and whose defining function satisfies some suitable assumptions, is infinite dimensional.
Later, Gromov in \cite{G} introduced the notion of K\"ahler hyperbolicity showing that, if $X$ is a K\"ahler 
complete simply-connected manifold whose K\"ahler 
form $\omega$ is $d$-bounded, admitting a uniform discrete subgroup of isometries, then 
$$\left\{
\begin{array}{lll}
\mathcal{H}^{p,q}_{\delbar,2}=\{0\} &\hbox{if}&\,\,\, p+q\neq n\\[10pt]
\mathcal{H}^{p,q}_{\delbar,2}\neq\{0\}&\hbox{if}&\,\,\, p+q= n.
\end{array}
\right.
$$

In the present paper we are interested in studying the space of $L^2$ $(p,q)$-harmonic forms with respect to the 
Bott-Chern Laplacian on Hermitian complete manifolds. More precisely, we consider a $d$-{\em bounded Stein manifold $M$}, i.e., a complex 
$n$-dimensional manifold $M$ admitting a smooth strictly plurisubharmonic exhaustion $\rho$ and endowed with the K\"ahler metric whose fundamental 
form is $\omega=i\del\delbar\rho$, such that $i\delbar\rho$ has bounded $L^\infty$ norm; examples of such manifolds 
are bounded strictly pseudoconvex domains in $\C^n$ with 
smooth boundary, endowed with the Bergman metric (see \cite{DO}). Denoting by $\mathcal{H}^{p,q}_{BC,2}$ the space of $W^{1,2}$ 
Bott-Chern harmonic $(p,q)$-forms, where $W^{1,2}$ denotes the Sobolev Space, we prove the following vanishing result\vskip.2truecm\noindent
{\bf Theorem} (see Theorem \ref{GromovBC}) {\em 
Let $M$ be a $d$-bounded Stein manifold of complex dimension $n$. Then 
$$\mathcal{H}^{p,q}_{BC,2}=\{0\},\quad \hbox{for}\,\, p+q\neq n.
$$
}
\vskip.2truecm\noindent
The paper is organized as follows: in Section \ref{preliminaries} we fix some notation and recall some well known results in K\"ahler geometry. 
In Section \ref{cut-off}, adapting an argument by Demailly \cite{DE}, we construct cut-off functions on a $d$-bounded Stein manifold, proving estimates 
of second order derivatives. Section \ref{pezzi} is mainly devoted to prove that a smooth $W^{1,2}$ $(p,q)$-form $\varphi$ satisfies 
$\tilde{\Delta}^g_{BC}\varphi=0$ if and only if 
$\del\varphi=0,\quad\delbar\varphi=0,\quad \delbar^*\del^*\varphi=0$ (see Theorem \ref{teo:pezzi}). Basic tools in the proof of 
such a theorem are the estimates of second order 
derivatives of cut-off functions ensured by Stein $d$-bounded assumption (see Lemma \ref{estimates-derivatives}). 
As a corollary, we also derive that (see Theorem \ref{teo:harmonic}), 
\begin{equation*}
\mathcal{H}^{p,q}_{BC,2}\subset\mathcal{H}^{p,q}_{\delbar,2}=\mathcal{H}^{p,q}_{\del,2}=\mathcal{H}^{p,q}_{d,2},
\end{equation*}
where the last two sets are the spaces of $L^2$-harmonic forms with respect to the $\del$-Laplacian $\Delta^g_{\del}$ and 
to the Hodge-de Rham Laplacian $\Delta^g_{d}$. Then, combining Theorem \ref{teo:harmonic} with the results by Gromov, we obtain the 
proof of the vanishing Theorem \ref{GromovBC}.

In \cite{G} Gromov gave an $L^2$ Hodge decomposition Theorem for complete Riemannian manifolds, (see also \cite[Chap.VIII, Thm.3.2]{DE}). 
It seems to be harder to prove a link between $W^{1,2}$ 
Bott-Chern harmonic forms and $L^2$ reduced cohomology. For other results on $L^2 $ cohomological decomposition in the complete almost Hermitian setting 
see \cite{HT}. Thanks to elliptic regularity, we consider only $L^2$ forms that are also smooth. 
It is sufficient for our results.

\medskip

\noindent{\sl Acknowledgments.} We are grateful to Professor Boyong Chen for useful suggestions and remarks for a
better presentation of the results.
\section{Preliminaries}\label{preliminaries}
Let $M$ be an $n$-dimensional complex manifold. Denote by $\Omega^r(M;\C)$, respectively $\Omega^{p,q}(M)$ the space of smooth complex $r$-forms, 
respectively 
smooth $(p,q)$-forms on $M$. Let $g$ be a Hermitian metric on $M$; denote by $\omega$ the fundamental form of $g$ and by 
$\hbox{Vol}_g:=\frac{\omega^n}{n!}$ the 
standard volume form. Let $\langle,\rangle$ be the pointwise Hermitian inner product induced by $g$ on the space of $(p,q)$-forms. 
Given any $\varphi\in\Omega^r(M;\C)$, set 
$$\vert\varphi(x)\vert^2_g=\langle\varphi,\varphi\rangle(x)$$
and
\begin{gather*}
\lVert \varphi\rVert_{L^2}:=\int_M\vert\varphi\vert^2_g\hbox{Vol}_g,\\
\lVert \varphi\rVert_{W^{1,2}}:=\int_M(\vert\varphi\vert^2_g+\vert\nabla\varphi\vert^2_g)\hbox{Vol}_g,
\end{gather*}
where $\nabla$ is the Levi-Civita connection of $(M,g)$ and $\vert\nabla\varphi\vert_g$ is the pointwise Hermitian norm of the 
covariant derivative of the $(p,q)$-form $\varphi$, induced by $g$ on the space of complex covariant tensors on $M$.
Define
$$
L^2(M):=\left\{\varphi\in\Omega^r(M;\C)\,\,\,\vert\,\,\, 0\le r\le 2n,\ \lVert \varphi\rVert_{L^2}<\infty \right\}
$$
and
$$
W^{1,2}(M):=\left\{\varphi\in\Omega^r(M;\C)\,\,\,\vert\,\,\, 0\le r\le 2n,\ \lVert \varphi\rVert_{W^{1,2}}<\infty \right\}
$$
For any given $\varphi\in\Omega^r(M;\C)$, we also set 
$$
\lVert \varphi\rVert_{L^\infty}:=\sup_{x\in M}\vert\varphi(x)\vert_g
$$
and we call $\varphi$ {\em bounded} if $
\lVert \varphi\rVert_{L^\infty}<\infty$. Furthermore, if $\varphi=d\eta$, then $\varphi$ is said to be {\em d-bounded}, if $\eta$ is bounded. 
For any $\varphi,\psi\in\Omega_c^{p,q}(M)$ denote by $\la,\ra$ the $L^2$-Hermitian product defined as
$$
\la\varphi,\psi\ra =\int_M\langle\varphi,\psi\rangle \hbox{Vol}_g
$$\newline
Denoting by $*:\Omega^{p,q}(M)\to \Omega^{n-p,n-q}(M)$ the complex anti-linear Hodge operator 
associated with $g$, the {\em Bott-Chern Laplacian} and {\em Aeppli Laplacian} $\tilde\Delta_{BC}^g$ and $ \tilde\Delta_{A}^g$ 
are the $4$-th order elliptic 
self-adjoint differential operators defined respectively as (see \cite[p.8]{S})
$$ 
\tilde\Delta_{BC}^g \;:=\;
\del\delbar\delbar^*\del^*+
\delbar^*\del^*\del\delbar+\delbar^*\del\del^*\delbar+
\del^*\delbar\delbar^*\del+\delbar^*\delbar+\del^*\del $$
and
$$ \tilde\Delta_{A}^g \;:=\; \del\del^*+\delbar\delbar^*+
\delbar^*\del^*\del\delbar+\del\delbar\delbar^*\del^*+
\del\delbar^*\delbar\del^*+\delbar\del^*\del\delbar^*\,,
$$
where, as usual
$$
\del^*=-*\del\, *\,,\qquad \delbar^*=-*\delbar\, *.
$$
\begin{rem}\label{bc-kh-id}{If $g$ is a K\"ahler metric on $M$, then as a consequence of the K\"ahler identities, (see \cite[p.120]{H} 
and \cite[Prop.2.4]{S}), it is
$$
\tilde\Delta_{BC}^g=\Delta^g_{\bar\partial}\Delta^g_{\bar\partial}+\delbar^*\delbar+\del^*\del,\quad 
\tilde\Delta_{A}^g=\Delta^g_{\bar\partial}\Delta^g_{\bar\partial}+\del\del^*+\delbar\delbar^*,
$$
$\Delta^g_{\bar\partial}$ being the Dolbeault Laplacian on $(M,g)$. }
\end{rem}
Finally, we define the space of $W^{1,2}$ Bott-Chern harmonic forms by setting 
$$
\mathcal{H}^{p,q}_{BC,2}:=\left\{\varphi\in\Omega^{p,q}(M)\,\,\,\vert\,\,\,\tilde\Delta_{BC}^g\varphi=0,\ \varphi\in W^{1,2}(M)\right\}.
$$

In the sequel we will need a local expression for the operators $\delbar^*$. To this purpose, let $(z^1,\ldots, z^n)$ 
be local holomorphic coordinates on $M$,  
$$
g=\sum_{\alpha,\beta}g_{\alpha\bar\beta}dz^\alpha\otimes d\bar{z}^\beta
$$
be the local expression of the Hermitian metric $g$ and $(g^{\bar\alpha\beta})=(g_{\bar\alpha\beta})^{-1}$. 
Given $\psi\in\Omega^{p,q}(M)$, for 
$$
A_p=(\alpha_1,\ldots,\alpha_p),\qquad 
B_q=(\beta_1,\ldots,\beta_q)$$ 
multiindices of length $p$, $q$ respectively, with 
$\alpha_1<\cdots <\alpha_p$ and $\beta_1<\cdots <\beta_q$, denote by 
$$
\psi=\sum_{A_p, B_q}\psi_{A_p\bar{B}_q}dz^{A_p}\wedge d\bar{z}^{B_q}
$$
the local expression of $\psi$ and by
$$
\psi^{\bar{A}_pB_q}=\sum_{\Gamma_p,\Lambda_q}g^{\bar{\alpha}_1\gamma_1}\cdots g^{\bar{\alpha}_p\gamma_p}
g^{\bar{\lambda}_1\beta_1}\cdots g^{\bar{\lambda}_q\beta_q}\psi_{\gamma_1\ldots \gamma_p\bar{\lambda}_1\ldots \bar{\lambda}_q}.
$$
Then, if $\varphi\in\Omega^{p,q}(M)$, locally the pointwise Hermitian inner product $\langle,\rangle$ on $\Omega^{p,q}(M)$ is given by
$$
\langle \varphi,\psi\rangle=\sum_{A_p, B_q}\varphi_{A_p\bar{B}_q}\overline{\psi^{\bar{A}_pB_q}}.
$$
The local expression of the pointwise Hermitian inner product induced by $g$ on the space of complex covariant tensors on $M$ is similar.

According to \cite[Prop.2.3]{MK}, we recall the local formula for $\delbar^*$, that is for any given $\psi\in\Omega^{p,q+1}(M)$, it is
$$
(\delbar^*\psi)^{\bar{A}_pB_q}=-\sum_{\gamma=1}^n
\left(\frac{\partial}{\partial z^\gamma}+\frac{\partial\log \det(g_{\alpha\bar\beta})}{\partial z^\gamma}\right)\psi^{\gamma\bar{A}_pB_q}.
$$
In the special case that $g$ is a K\"ahler metric on $M$, then as a consequence of the last formula, for every fixed $x_0\in M$, denoting by 
$(z^1,\ldots, z^n)$ local normal holomorphic coordinates at $x_0$, we obtain
\begin{equation}\label{delbar-normal}
(\delbar^*\psi)_{\alpha_1\ldots\alpha_p\bar{\beta}_1\ldots\bar{\beta}_q}(x_0)=-\sum_{\gamma=1}^n
\frac{\partial \psi_{\bar{\gamma}\alpha_1\ldots\alpha_p\bar{\beta}_1\ldots\bar{\beta}_q}}{\partial z^\gamma}(x_0).
\end{equation}
Finally, for any given $\varphi\in\Omega^{p,q}(M)$, still using local normal holomorphic coordinates at $x_0$, we have
\begin{equation}\label{nabla-normal}
|\nabla\varphi|^2_g(x_0)=2\sum_{A_p, B_q}\sum_{\gamma=1}^n
\biggl(\biggl|\de{\varphi_{A_p\bar{B}_q}}{z^\gamma}\biggr|^2+\biggl|\de{\varphi_{A_p\bar{B}_q}}{\bar{z}^\gamma}\biggr|^2\biggl)(x_0).
\end{equation}

\section{Construction of cut-off functions in $d$-bounded Stein manifolds}
\label{cut-off}
Let $M$ a be Stein manifold and let $\rho$ be a strictly plurisubharmonic exhausting smooth function. Denote $\omega=i\del\delbar\rho$ the 
fundamental form with the K\"ahler metric $g$ associated. We say that $M$ is {\em d-bounded} if $\omega=d\eta$ and $\eta=i\delbar\rho$ is 
bounded. In particular, $\omega$ is $d$-bounded.  In the following, $\rho$, $\omega$, $g$, $\eta$ are considered fixed.

We remark that any $d$-bounded Stein manifold is complete, see \cite[Chap.VIII, Lemma 2.4]{DE}.

Examples of $d$-bounded Stein manifolds are bounded strictly pseudoconvex domains in $\C^n$ with smooth boundary, endowed with the Bergman metric, 
see \cite[Prop.3.4]{DO}.

Now we prove the existence of cut-off functions with specific bounds on the second order derivatives on a d-bounded Stein manifold. 
We need the following known lemma.

\begin{lemma}
Let $a,b\in\R$, $a<b$. Then there exists a \Cinf function $\psi:\R\rightarrow [0,1]\subset\R$ such that the following properties hold:
\begin{itemize}
\item $\psi(t)=1 \ \iff\  t\le a$;
\item $\psi(t)=0 \ \iff\  t\ge b$;
\item $\exists C\in\R$ such that $|\psi'(t)|,|\psi''(t)|\le C\psi(t)^{\frac{1}{2}}\ \forall t\in\R$.
\end{itemize}
\end{lemma}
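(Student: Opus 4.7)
The plan is to build $\psi$ as a suitable power of a standard smooth transition function, exploiting the fact that raising to a large enough exponent gives us the desired control of the first two derivatives by $\psi^{1/2}$. First I would fix a \Cinf\ function $f:\R\to[0,1]$ satisfying $f(t)=1$ iff $t\le a$, $f(t)=0$ iff $t\ge b$, and $0<f(t)<1$ on $(a,b)$; such an $f$ is produced by the usual mollification of the characteristic function of $(-\infty,(a+b)/2]$, with the strict positivity on $(a,b)$ ensured by choosing the mollifier's support carefully. Then I would define
\[
\psi(t):=f(t)^{4}.
\]

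The first two bullets transfer verbatim from $f$ to $\psi$, because $s\mapsto s^{4}$ vanishes only at $s=0$ and equals $1$ only at $s=1$: thus $\psi(t)=1\iff f(t)=1\iff t\le a$ and $\psi(t)=0\iff f(t)=0\iff t\ge b$.

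For the derivative bound, which is the content of the lemma, I would compute
\[
\psi' \;=\; 4f^{3}f',\qquad \psi''\;=\;12\,f^{2}(f')^{2}+4\,f^{3}f'',
\]
and note that $\psi^{1/2}=f^{2}$. Since $f$ is constant outside the compact interval $[a,b]$, both $f'$ and $f''$ are continuous and compactly supported, hence bounded by some $M>0$ on all of $\R$. Combined with $0\le f\le 1$, this yields
\[
|\psi'(t)| \;\le\; 4M\, f(t)^{3}\;\le\; 4M\, f(t)^{2}\;=\;4M\,\psi(t)^{1/2},
\]
\[
|\psi''(t)| \;\le\; 12M^{2}f(t)^{2}+4M f(t)^{3}\;\le\;(12M^{2}+4M)\,\psi(t)^{1/2}.
\]
Taking $C:=12M^{2}+4M$ closes the argument.

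There is no real obstacle; the only design choice is the exponent. The power $4$ is dictated by the estimate on $\psi''$: if $\psi=f^{N}$, then $\psi''$ contains the term $N(N-1)f^{N-2}(f')^{2}$, which is bounded by a multiple of $\psi^{1/2}=f^{N/2}$ only once $N-2\ge N/2$, i.e.\ $N\ge 4$. Exponents $N=2$ or $N=3$ would suffice for $\psi'$ but fail for $\psi''$, so $N=4$ is the smallest choice that simultaneously controls both derivatives in the required way.
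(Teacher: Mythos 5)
Your proof is correct, but it takes a genuinely different route from the paper. The paper works with the explicit quotient $\psi(t)=\phi(b-t)/\bigl(\phi(b-t)+\phi(t-a)\bigr)$, where $\phi(t)=e^{-1/t^{2}}$ for $t>0$, and obtains the bound $|\psi'|\le C\psi^{1/2}$ by differentiating, factoring $\psi^{1/2}$ out of the resulting expression, and checking that the remaining factor $\phi(b-t)^{1/2}\phi(t-a)\,(\phi(b-t)+\phi(t-a))^{-3/2}\bigl((b-t)^{-3}+(t-a)^{-3}\bigr)$ is bounded --- which relies on the super-polynomial decay of $e^{-1/t^{2}}$ at $0$ (the analogous computation for $\psi''$ is only asserted there). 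You instead take \emph{any} smooth transition function $f$ with the right level sets and set $\psi=f^{4}$; then the bounds follow from nothing more than $0\le f\le1$, the boundedness of $f'$ and $f''$ (automatic since they are continuous and supported in $[a,b]$), and the elementary inequality $f^{3}\le f^{2}=\psi^{1/2}$. Your argument is more robust and arguably cleaner: it needs no special properties of the profile function, it localizes all the analysis into the single trivial inequality $f^{k}\le f^{2}$ for $k\ge2$, and as your closing remark shows it scales transparently (taking $\psi=f^{2m}$ controls the first $m$ derivatives by $\psi^{1/2}$). The paper's construction buys a single closed-form $\psi$ and, implicitly, the stronger fact that \emph{all} derivatives of that particular $\psi$ are controlled by $\psi^{1/2}$, but none of that extra strength is used downstream in Lemma 3.2, where only $\psi'$ and $\psi''$ enter. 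Your proof is a valid and self-contained substitute.
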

\begin{proof}
Let us define $\phi:\R\rightarrow\R$, a \Cinf function such that
\begin{equation*}
\phi(t)=
\begin{cases}
\exp(-\frac{1}{t^2}) & \text{ if } t>0\\
0 & \text{ if } t\le 0.
\end{cases}
\end{equation*}
Then we define
\begin{equation*}
\psi(t)=\frac{\phi(b-t)}{\phi(b-t)+\phi(t-a)}.
\end{equation*}
Note that $\psi(t)=1$ iff $t\le a$ and $\psi(t)=0$ iff $t\ge b$. After some calculations we obtain
\begin{equation*}
\begin{split}
\psi'(t) & = \frac{-2\frac{\phi(b-t)}{(b-t)^3}(\phi(b-t)+\phi(t-a))-
\phi(b-t)\biggl(-2\frac{\phi(b-t)}{(b-t)^3}+2\frac{\phi(t-a)}{(t-a)^3}\biggr)}{(\phi(b-t)+\phi(t-a))^2}\\
& = -2\frac{\phi(b-t)\phi(t-a)}{(\phi(b-t)+\phi(t-a))^2}\biggl(\frac{1}{(b-t)^3}+\frac{1}{(t-a)^3}\biggr)\\
& = -2\psi(t)^\frac{1}{2}\frac{\phi(b-t)^\frac{1}{2}\phi(t-a)}{(\phi(b-t)+\phi(t-a))^\frac{3}{2}}
\biggl(\frac{1}{(b-t)^3}+\frac{1}{(t-a)^3}\biggr).
\end{split}
\end{equation*}
This implies that $\exists C\in\R$ such that $|\psi'(t)|\le C\psi(t)^{\frac{1}{2}}\ \forall t\in\R$. The 
calculations of the estimate on $\psi''$ are analogous.
\end{proof}

The following lemma is inspired by \cite[Chap.VIII, Lemma 2.4]{DE}.

\begin{lemma}\label{estimates-derivatives}
Let $M$ be a d-bounded Stein manifold of complex dimension $n$. Then there exists a sequence $\{K_\nu\}_{\nu\in\N}$ of 
compact subsets of $M$ and a sequence $a_\nu : M \rightarrow [0,1]\subset\R$, $\nu \in \N$, of \Cinf functions with compact support, 
called cut-off functions, such that the following properties hold:
\begin{itemize}
\item $\bigcup_{\nu\in\N} K_\nu=M$ and $K_\nu\subset \mathop K\limits^ \circ$$_{\nu+1}$;
\item $\forall\nu \in \N$ $a_\nu=1$ in a neighbourhood of $K_\nu$ 
and $\supp{a_\nu}\subset \mathop K\limits^ \circ$$_{\nu+1}$;
\item $\exists C\in\R$ such that $|\del a_\nu(x)|_g,|\delbar a_\nu(x)|_g,|\del\delbar a_\nu(x)|_g\le 2^{-\nu}C a_\nu(x)^\frac{1}{2}$ $\forall x\in M$.
\end{itemize}
\end{lemma}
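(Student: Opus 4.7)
The plan is to set $a_\nu = \psi_\nu\circ\rho$, where $\psi_\nu$ is a rescaled version of the one-variable cut-off function provided by the previous lemma. Fix once and for all a function $\psi_0\colon\R\to[0,1]$ supplied by that lemma with parameters $a=0,\ b=1$, together with the associated constant $C_0$ such that $|\psi_0'(t)|,|\psi_0''(t)|\le C_0\,\psi_0(t)^{1/2}$ for all $t\in\R$. Choose an increasing sequence $c_\nu\to\infty$ with $c_{\nu+1}>c_\nu+2^\nu+1$, set $K_\nu=\{x\in M:\rho(x)\le c_\nu-1\}$, and define
$$a_\nu(x)=\psi_0\!\left(\frac{\rho(x)-c_\nu}{2^\nu}\right).$$
Because $\rho$ is an exhaustion, each $K_\nu$ is compact, $\bigcup_\nu K_\nu=M$, and $K_\nu$ is contained in the interior of $K_{\nu+1}$. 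By construction, $a_\nu\equiv 1$ on the open neighbourhood $\{\rho<c_\nu\}\supset K_\nu$, while $\supp a_\nu\subset\{\rho\le c_\nu+2^\nu\}$ sits inside the interior of $K_{\nu+1}$.

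The derivative estimates come from the chain rule. One computes
$$\del a_\nu=\frac{1}{2^\nu}\,\psi_0'\!\left(\tfrac{\rho-c_\nu}{2^\nu}\right)\del\rho,\qquad \del\delbar a_\nu=\frac{1}{2^{2\nu}}\psi_0''\!\left(\tfrac{\rho-c_\nu}{2^\nu}\right)\del\rho\wedge\delbar\rho+\frac{1}{2^\nu}\psi_0'\!\left(\tfrac{\rho-c_\nu}{2^\nu}\right)\del\delbar\rho,$$
with the conjugate expression for $\delbar a_\nu$. The $d$-boundedness assumption provides $|\del\rho|_g=|\delbar\rho|_g=|\eta|_g\le\|\eta\|_{L^\infty}$ at every point, so multiplying by $C_0 a_\nu^{1/2}/2^\nu$ immediately bounds $|\del a_\nu|_g$ and $|\delbar a_\nu|_g$ by $2^{-\nu}C\,a_\nu^{1/2}$ for a suitable $C$ depending on $C_0$ and $\|\eta\|_{L^\infty}$.

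For $\del\delbar a_\nu$ one additionally needs a uniform bound on $|\del\delbar\rho|_g$; this follows from the identity $\del\delbar\rho=-i\omega$ and the elementary fact that the pointwise norm of the fundamental form of a K\"ahler metric with respect to itself is a constant depending only on $n$. Combining the two summands and using $2^{-2\nu}\le 2^{-\nu}$ yields $|\del\delbar a_\nu|_g\le 2^{-\nu}C'\,a_\nu^{1/2}$. The only real obstacle is choosing the right dilation factor $2^\nu$ in the argument of $\psi_0$: it is precisely the chain rule applied to this dilation that converts the universal constant $C_0$ from the previous lemma into the required geometric decay $2^{-\nu}$, after which $d$-boundedness of $\omega$ and the pointwise boundedness of $|\omega|_g$ supply the remaining ingredients.
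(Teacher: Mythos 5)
Your proof is correct and takes essentially the same approach as the paper's: compose the one-variable cutoff of the previous lemma with a rescaled copy of $\rho$ so that the chain rule converts the universal bound $|\psi'|,|\psi''|\le C\psi^{1/2}$ into the decay factor $2^{-\nu}$, then use $d$-boundedness of $\omega$ to control $|\del\rho|_g=|\delbar\rho|_g$ and the constancy of $|\omega|_g=|{-i}\,\del\delbar\rho|_g$ to control the second-order term. The paper uses the multiplicative rescaling $a_\nu=\psi(2^{-\nu}\rho)$ with sublevel sets at heights $2^\nu$ instead of your translated-and-dilated version $\psi_0\bigl((\rho-c_\nu)/2^\nu\bigr)$, but this difference is purely cosmetic.
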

\begin{proof}
We define
\begin{equation*}
a_\nu(x)=\psi(2^{-\nu}\rho(x)) \ \forall x\in M \text{ and } K_\nu=\overline{\{x\in M\ |\ \rho(x)< 2^{\nu}\}},
\end{equation*}
where $\psi$ is the function of the previous lemma, with $a=1.1$ and $b=1.9$. 

Let us check that the claimed properties hold.
The subsets $K_\nu$ are compact because of the definition of exhausting function. If $x\in M$, 
then $\exists\nu\in\N$ such that $\rho(x)<2^\nu$, so $x\in K_\nu$, thus $\bigcup_{\nu\in\N} K_\nu=M$. 
The inclusions $K_\nu\subset \mathop K\limits^ \circ$$_{\nu+1}$ hold by the construction of $K_\nu$, in fact if $x\in K_\nu$, then $\rho(x)\le 2^\nu$ 
by continuity and $x\in\{y\in M\ |\ \rho(y)<2^\nu\cdot 1.5\}\subset\mathop K\limits^ \circ$$_{\nu+1}$.

The functions $a_\nu$ are \Cinf because $\psi$ and $\rho$ are \Cinf. The function $\psi$ takes values in 
the interval $[0,1]$, and so is true for $a_\nu$. Choosing a sufficiently small neighbourhood of $K_\nu$, we can assume that $2^{-\nu}\rho<1.1$, 
so $a_\nu=1$ in that neighbourhood. 

In order to prove $\supp{a_\nu}\subset \mathop K\limits^ \circ$$_{\nu+1}$, let us take $\tilde{x}\in\supp{a_\nu}$ 
and a sequence $\{x_k\}_{k\in\N}$ of points in $M$ such that $a_\nu(x_k)>0\ \forall k\in \N$ 
and $x_k\rightarrow\tilde{x}$ as $k\rightarrow\infty$. By the construction of $\psi$, 
we have that, $\forall x\in M$, $a_\nu(x)>0$ if and only if $2^{-\nu}\rho(x)< 1.9$. 
Therefore, by the continuity of $\rho$, $2^{-\nu}\rho(\tilde{x})\le 1.9$, so that $\rho(\tilde{x})< 2^{\nu}\cdot 1.95$ 
and $\tilde{x}\in \mathop K\limits^ \circ$$_{\nu+1}$. Because $\supp{a_\nu}$ is a close set contained in a compact set, then it is compact.

Finally we have to prove the estimates on the differentials of $a_\nu$. Let $x\in M$, then
\begin{equation*}
\del a_\nu(x)= 2^{-\nu}\psi'(2^{-\nu}\rho(x)) \sum\limits_{i=1}^n \de{\rho}{z^i}(x)dz^i,
\end{equation*}
and
\begin{equation*}
\begin{split}
|\del a_\nu(x)|_g & = 2^{-\nu}|\psi'(2^{-\nu}\rho(x))| |\del\rho(x)|_g \\
& \le 2^{-\nu}C(\psi(2^{-\nu}\rho(x)))^{\frac{1}{2}} ||\del\rho||_{L^\infty} \\
& \le 2^{-\nu}Ca_\nu(x)^{\frac{1}{2}},
\end{split}
\end{equation*}
where the constant $C$ is taken big enough 
and may not be the same at every passage of the calculations. In the last passage we used the 
hypothesis that $\omega$ is $d$-bounded and the fact that $\rho$ is real, 
so $\de{\rho}{\bar{z}^i}(x)=\overline{\de{\rho}{z^i}}(x)$ and $|\del\rho(x)|_g=|\delbar\rho(x)|_g$. 
By the same calculations, we also obtain the estimate of $|\delbar a_\nu(x)|_g$. Moreover,
\begin{equation*}
\begin{split}
\delbar\del a_\nu(x) & = 2^{-\nu} \sum\limits_{i,j=1}^n \biggl(2^{-\nu}\psi''(2^{-\nu}\rho(x)) \de{\rho}{\bar{z}^j}(x) \de{\rho}{z^i}(x)+\\  
& + \psi'(2^{-\nu}\rho(x)) \de{^2\rho}{\bar{z}^j \del z^i}(x)\biggr) d\bar{z}^j\land dz^i,
\end{split}
\end{equation*}
and
\begin{equation*}
\begin{split}
|\delbar\del a_\nu(x)|_g & \le 2^{-\nu} \biggl(2^{-\nu}|\psi''(2^{-\nu}\rho(x))||\delbar\rho(x)|_g|\del\rho(x)|_g+\\
& + |\psi'(2^{-\nu}\rho(x))| |\sum\limits_{i,j=1}^n \de{^2\rho}{\bar{z}^j \del z^i}(x) d\bar{z}^j\land dz^i|_g \biggr)\\
& \le 2^{-\nu}C(\psi(2^{-\nu}\rho(x)))^{\frac{1}{2}} \biggl(2^{-\nu}||\delbar\rho||_{L_\infty}||\del\rho||_{L_\infty}+|\omega(x)|_g\biggr)\\
& \le 2^{-\nu}Ca_\nu(x)^{\frac{1}{2}}.
\end{split}
\end{equation*}
In fact $\omega$ is $d$-bounded as before and $|\omega(x)|_g$ is constant, due to the definition of the metric. The proof is complete.
\end{proof}

\section{Vanishing of $L^2$ Bott-Chern harmonic forms}\label{pezzi}
\label{vanishing}
Our main theorem states that the following property, true in the compact case, also holds in our non-compact case.
\begin{theorem}
\label{teo:pezzi}
Let $M$ be a d-bounded Stein manifold of complex dimension $n$. Let $\varphi\in \Omega^{p,q}(M)\cap W^{1,2}(M)$.
If $\tilde{\Delta}_{BC}^g\varphi=0$, then 
\begin{equation*}
\del\varphi=0,\quad \delbar\varphi=0,\quad \delbar^*\del^*\varphi=0.
\end{equation*}
\end{theorem}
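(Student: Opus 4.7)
My plan is to use the K\"ahler-case identity of Remark \ref{bc-kh-id},
\[
\tilde\Delta_{BC}^g=\Delta^g_{\delbar}\Delta^g_{\delbar}+\delbar^*\delbar+\del^*\del,
\]
pair the equation $\tilde\Delta_{BC}^g\varphi=0$ in the $L^2$-inner product with the compactly supported test form $a_\nu^2\varphi$ (for $a_\nu$ the cut-offs of Lemma \ref{estimates-derivatives}), and pass to the limit $\nu\to\infty$. Since $a_\nu^2\varphi$ is compactly supported, all integrations by parts are legitimate, giving
\[
0=\la\Delta^g_{\delbar}\varphi,\Delta^g_{\delbar}(a_\nu^2\varphi)\ra+\la\delbar\varphi,\delbar(a_\nu^2\varphi)\ra+\la\del\varphi,\del(a_\nu^2\varphi)\ra.
\]

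Expanded via the Leibniz rule, each of the three pieces splits as a non-negative main term $\int_M a_\nu^2|\!\cdot\!|^2\,\vol$ plus an error carrying at least one derivative of $a_\nu^2$. The errors from the second and third pieces involve only $\del a_\nu^2$ or $\delbar a_\nu^2$, whereas the error from the fourth-order first piece comes from the commutator $[\Delta^g_{\delbar},a_\nu^2]$, a first-order differential operator on $\varphi$ with coefficients built from $\del a_\nu^2$, $\delbar a_\nu^2$ and $\del\delbar a_\nu^2$. Combining Lemma \ref{estimates-derivatives} with its Leibniz consequences $|\del a_\nu^2|_g,|\delbar a_\nu^2|_g,|\del\delbar a_\nu^2|_g\lesssim 2^{-\nu}a_\nu^{1/2}$, a Cauchy-Schwarz splitting that assigns one factor of $a_\nu^{1/2}$ to the main term and the other to the error, followed by Young's inequality $ab\le\tfrac12 a^2+\tfrac12 b^2$ to absorb half of the main term into itself, produces
\[
\int_M a_\nu^2\bigl(|\Delta^g_{\delbar}\varphi|^2+|\delbar\varphi|^2+|\del\varphi|^2\bigr)\vol\le C\,2^{-2\nu}\,\|\varphi\|_{W^{1,2}}^2.
\]
Letting $\nu\to\infty$, dominated convergence (for $\delbar\varphi$ and $\del\varphi$, which lie in $L^2$ by the $W^{1,2}$ hypothesis) and Fatou (for $\Delta^g_{\delbar}\varphi$) yield $\del\varphi=\delbar\varphi=\Delta^g_{\delbar}\varphi=0$.

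The main obstacle is controlling the fourth-order term: here one genuinely needs the second-derivative estimate $|\del\delbar a_\nu|_g\le 2^{-\nu}Ca_\nu^{1/2}$ of Lemma \ref{estimates-derivatives}, and also genuinely needs $\varphi\in W^{1,2}$ rather than just $L^2$, because the commutator $[\Delta^g_{\delbar},a_\nu^2]\varphi$ contains a $\nabla\varphi$-term that must lie in $L^2$ for the Cauchy-Schwarz bound to close. To finish, observe that $\delbar\varphi=0$ together with $\Delta^g_{\delbar}\varphi=(\delbar\delbar^*+\delbar^*\delbar)\varphi=0$ forces $\delbar\delbar^*\varphi=0$; analogously, using the K\"ahler identity $\Delta^g_{\del}=\Delta^g_{\delbar}$ together with $\del\varphi=0$, one gets $\del\del^*\varphi=0$. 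A second, simpler cut-off argument---pairing $\delbar\delbar^*\varphi=0$ (resp.\ $\del\del^*\varphi=0$) with $a_\nu^2\varphi$ and integrating by parts only once, which requires only the first-order bound on $a_\nu$---then yields $\delbar^*\varphi=0$ and $\del^*\varphi=0$, and in particular $\delbar^*\del^*\varphi=0$.
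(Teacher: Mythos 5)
Stage one of your argument is, in structure, the paper's own proof: the same K\"ahler reduction of $\tilde\Delta^g_{BC}$ from Remark \ref{bc-kh-id}, the same cut-offs from Lemma \ref{estimates-derivatives}, and the same integration-by-parts/Cauchy--Schwarz scheme against a compactly supported multiple of $\varphi$. Two caveats, one of which is a real (though fixable) slip. First, the pointwise bound you assert for the commutator,
\begin{equation*}
|[\Delta^g_{\delbar},a_\nu^2]\varphi|_g\le C\bigl(|\del\delbar a_\nu^2|_g\,|\varphi|_g+(|\del a_\nu^2|_g+|\delbar a_\nu^2|_g)\,|\nabla\varphi|_g\bigr),
\end{equation*}
is not routine bookkeeping: it is precisely the content of the paper's Lemma \ref{estimates} (terms of the form $\delbar^*(\delbar f\land\varphi)$ and $\delbar^*(\delbar f\land*\varphi)$, estimated in K\"ahler normal coordinates), and a complete write-up must prove it. Second, the powers do not match as you stated them: with the test form $a_\nu^2\varphi$ your main terms carry the weight $a_\nu^2$, but the bound $|\delbar a_\nu^2|_g,|\del\delbar a_\nu^2|_g\le C2^{-\nu}a_\nu^{1/2}$ gives, after Cauchy--Schwarz, an error of size $2^{-\nu}\bigl(\int_M a_\nu|\Delta^g_{\delbar}\varphi|_g^2\,\vol\bigr)^{1/2}$ times $W^{1,2}$ data of $\varphi$; since $a_\nu\ge a_\nu^2$, Young's inequality cannot absorb $\int_M a_\nu|\Delta^g_{\delbar}\varphi|_g^2\,\vol$ into $\int_M a_\nu^2|\Delta^g_{\delbar}\varphi|_g^2\,\vol$, and nothing else controls that term (the hypothesis $\varphi\in W^{1,2}$ gives no bound on second derivatives). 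The remedy is immediate: Leibniz actually yields the stronger bounds $|\delbar a_\nu^2|_g=2a_\nu|\delbar a_\nu|_g\le C2^{-\nu}a_\nu^{3/2}$ and $|\del\delbar a_\nu^2|_g\le C2^{-\nu}a_\nu$, and with the factor $a_\nu$ the Cauchy--Schwarz splitting matches the $a_\nu^2$-weighted main terms and the absorption closes. (Alternatively, pair with $a_\nu\varphi$ as the paper does; then the $a_\nu^{1/2}$ bound is exactly the right power.)

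Your endgame, by contrast, is genuinely different from the paper's, and is a simplification. Having shown $\del\varphi=\delbar\varphi=0$, the paper rewrites $\tilde\Delta^g_{BC}\varphi=\del\delbar\delbar^*\del^*\varphi$ and runs a second \emph{fourth-order} cut-off argument, expanding $\delbar^*\del^*(a_\nu\varphi)$, which again needs the second-derivative estimates on $a_\nu$. You instead extract the second-order equations $\delbar\delbar^*\varphi=0$ and $\del\del^*\varphi=0$ from $\Delta^g_{\delbar}\varphi=0$, closedness, and the K\"ahler identity $\Delta^g_{\del}=\Delta^g_{\delbar}$, and then run cut-off arguments requiring only the first-order bounds to reach the stronger conclusions $\delbar^*\varphi=0$ and $\del^*\varphi=0$, from which $\delbar^*\del^*\varphi=0$ is trivial. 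This is correct (the convergence can be handled by Fatou alone, since $a_\nu\nearrow 1$ pointwise), it avoids all further fourth-order manipulation, and it shows directly that $\mathcal{H}^{p,q}_{BC,2}\subset\mathcal{H}^{p,q}_{d,2}$, which the paper only deduces afterwards in Theorem \ref{teo:harmonic}; the extra input, the K\"ahler identity, is of course available in this setting.
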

The following Lemma will be useful for the proof of Theorem \ref{teo:pezzi}.
\begin{lemma}\label{estimates}
Let $M$ be a K\"ahler manifold of complex dimension $n$ and denote its metric with $g$. 
If $\varphi\in\Omega^{p,q}(M)$ and $\{a_\nu\}_\nu$ are \Cinf functions on $M$, then $\exists C>0$ such that 
$\forall\nu\in\N$ 
\begin{equation}
\begin{split}
\label{lemma:delbar}
&|\delbar^*(\delbar a_\nu\land*\varphi)|_g\le C\bigl(|\del\delbar a_\nu|_g|\varphi|_g+|\delbar a_\nu|_g|\nabla\varphi|_g\bigr),\\
&|\delbar^*(\delbar a_\nu\land\varphi)|_g \le C\bigl(|\del\delbar a_\nu|_g|\varphi|_g+|\delbar a_\nu|_g|\nabla\varphi|_g\bigr).
\end{split}
\end{equation}
\end{lemma}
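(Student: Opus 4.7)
The plan is to observe first that the two inequalities are equivalent modulo a Hodge-star reduction. Since $M$ is Kähler (and in fact for any orientable Riemannian manifold), the anti-linear Hodge operator $*$ is a pointwise isometry, $|{*\alpha}|_g=|\alpha|_g$, and is parallel with respect to the Levi-Civita connection $\nabla$, so $|\nabla(*\alpha)|_g=|\nabla\alpha|_g$. Setting $\tilde\varphi:=*\varphi\in\Omega^{n-p,n-q}(M)$, the first estimate is exactly the second estimate applied to $\tilde\varphi$. Hence it suffices to prove
$$|\delbar^*(\delbar a_\nu\wedge\varphi)|_g\le C\bigl(|\del\delbar a_\nu|_g|\varphi|_g+|\delbar a_\nu|_g|\nabla\varphi|_g\bigr).$$

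Since both sides are pointwise norms, I would fix an arbitrary $x_0\in M$ and pick local normal holomorphic coordinates $(z^1,\ldots,z^n)$ at $x_0$, in which $g_{i\bar j}(x_0)=\delta_{ij}$. Writing $\varphi=\sum_{A_p,B_q}\varphi_{A_p\bar B_q}\,dz^{A_p}\wedge d\bar z^{B_q}$, the components of $\psi:=\delbar a_\nu\wedge\varphi$ are explicit antisymmetrized linear combinations of products $(\partial a_\nu/\partial\bar z^k)\,\varphi_{A_p\bar B_q}$. Applying the Kähler formula \eqref{delbar-normal} at $x_0$ gives
$$(\delbar^*\psi)_{\alpha_1\ldots\alpha_p\bar\beta_1\ldots\bar\beta_q}(x_0)=-\sum_{\gamma=1}^n\frac{\partial\psi_{\bar\gamma\alpha_1\ldots\alpha_p\bar\beta_1\ldots\bar\beta_q}}{\partial z^\gamma}(x_0).$$
Distributing $\partial/\partial z^\gamma$ across the product structure of $\psi$ splits each summand into two types: terms where the derivative falls on $\delbar a_\nu$, producing factors $\partial^2 a_\nu/(\partial z^\gamma\partial\bar z^k)$ (i.e.\ components of $\del\delbar a_\nu$) multiplied by components of $\varphi$; and terms where it falls on $\varphi$, producing factors $\partial a_\nu/\partial\bar z^k$ multiplied by partial derivatives $\partial\varphi_{A_p\bar B_q}/\partial z^\gamma$.

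To conclude, I would apply Cauchy-Schwarz to each family separately. At $x_0$ the pointwise norm of any $(r,s)$-form is, up to a universal combinatorial constant, the sum of squared moduli of its strictly increasing components (because $g_{i\bar j}(x_0)=\delta_{ij}$), so the first family is dominated by a constant times $|\del\delbar a_\nu|_g|\varphi|_g$. For the second family, formula \eqref{nabla-normal} identifies $|\nabla\varphi|^2_g(x_0)$ with (twice) the sum of $|\partial\varphi_{A_p\bar B_q}/\partial z^\gamma|^2$ and $|\partial\varphi_{A_p\bar B_q}/\partial\bar z^\gamma|^2$, so Cauchy-Schwarz yields a bound by a constant times $|\delbar a_\nu|_g|\nabla\varphi|_g$. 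Absorbing all combinatorial constants (depending only on $n,p,q$) into a single $C>0$ finishes the proof. The only real work is the index bookkeeping in the wedge-product expansion and the antisymmetrization of $\psi$; there is no analytic obstacle once normal coordinates are in place.
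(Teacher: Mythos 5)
Your proof is correct and, for the main estimate on $|\delbar^*(\delbar a_\nu\wedge\varphi)|_g$, follows exactly the paper's route: normal holomorphic coordinates at $x_0$, the formula \eqref{delbar-normal} for $\delbar^*$, the Leibniz split into a ``second derivatives of $a_\nu$ times $\varphi$'' family and a ``first derivatives of $a_\nu$ times first derivatives of $\varphi$'' family, and Cauchy--Schwarz together with \eqref{nabla-normal}. The one genuine difference is your treatment of the first inequality: the paper reproves it by repeating the whole computation with the explicit local expression of $*\varphi$ from Morrow--Kodaira (components involving $\det(g_{h\bar k})$ and raised indices, whose first derivatives vanish at $x_0$ in normal coordinates), whereas you reduce it to the second inequality by observing that the anti-linear Hodge star is a pointwise isometry which is parallel for the Levi-Civita connection, so $|{*\varphi}|_g=|\varphi|_g$ and $|\nabla(*\varphi)|_g=|\nabla\varphi|_g$. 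This reduction is cleaner and avoids the second round of index bookkeeping; the only points worth making explicit are that the constant you inherit is the one for bidegree $(n-p,n-q)$, which still depends only on $n,p,q$, and that the type-preservation $\nabla:\Omega^{p,q}\to\Gamma(T^*M\otimes\Lambda^{p,q})$ implicitly used in \eqref{nabla-normal} relies on the K\"ahler hypothesis (your parenthetical ``any orientable Riemannian manifold'' is slightly too generous on that particular point, though the norm identities themselves are fine). Neither issue is a gap.
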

\begin{proof}
The pointwise Hermitian norm on forms is invariant under change of coordinates, 
so we can prove the inequalities locally with a uniform constant $C$. For every fixed $x_0\in M$, denoting by $(z^1,\ldots, z^n)$ 
local normal holomorphic coordinates at $x_0$, we have
\begin{gather*}
\delbar a_\nu=\sum_\beta \de{a_\nu}{\bar{z}^\beta}d\bar{z}^\beta, \\
\varphi=\sum_{A_p,B_q} \varphi_{A_p\bar{B}_q} dz^{A_p}\land d\bar{z}^{B_q}, \\
\delbar a_\nu\land\varphi =\sum_{A_p,B_q,\beta} \de{a_\nu}{\bar{z}^\beta} \varphi_{A_p\bar{B}_q} 
d\bar{z}^\beta\land dz^{A_p}\land d\bar{z}^{B_q}
\end{gather*}
We can write
\begin{equation*}
\delbar a_\nu\land\varphi= \frac{1}{p!(q+1)!}\sum_{\substack{\alpha_1,\dots,\alpha_p,\\ \beta_0,\beta_1,\dots,\beta_q}} 
(\delbar a_\nu\land\varphi)_{\alpha_1\dots\alpha_p\bar{\beta}_0\bar{\beta}_1\dots\bar{\beta}_q}
dz^{\alpha_1\dots\alpha_p}\land d\bar{z}^{\beta_0\beta_1\dots\beta_q}
\end{equation*}
where the coefficients $(\delbar a_\nu\land\varphi)_{\alpha_1\dots\alpha_p\bar{\beta}_0\bar{\beta}_1\dots\bar{\beta}_q}$ are antisymmetric 
in the indices $\alpha_1,\dots,\alpha_p,\bar{\beta}_0,\bar{\beta}_1,\dots,\bar{\beta}_q$, so 
\begin{equation*}
\begin{split}
(\delbar a_\nu\land\varphi)_{\bar{\beta}_0\alpha_1\dots\alpha_p\bar{\beta}_1\dots\bar{\beta}_q} & =
(-1)^p (\delbar a_\nu\land\varphi)_{\alpha_1\dots\alpha_p\bar{\beta}_0\bar{\beta}_1\dots\bar{\beta}_q} \\
& = \sum_{j=0}^q (-1)^{qj} \de{a_\nu}{\bar{z}^{\beta_j}}  \varphi_{\alpha_1\dots\alpha_p\bar{\beta}_{j+1}\dots\bar{\beta}_{j+q}},
\end{split}
\end{equation*}
where we set $\beta_{j+q}:=\beta_{j-1}$ for $j=1,\dots,q$.
Now we apply (\ref{delbar-normal}) and obtain
\begin{equation*}
\begin{split}
(\delbar^*(\delbar a_\nu\land\varphi))&_{\alpha_1\ldots\alpha_p\bar{\beta}_1\ldots\bar{\beta}_q}(x_0) = \\
& = -\sum_{\beta_0=1}^n
\de{}{z^{\beta_0}} \bigl( (\delbar a_\nu\land\varphi)_{\bar{\beta}_0\alpha_1\ldots\alpha_p\bar{\beta}_1\ldots\bar{\beta}_q}\bigr)(x_0)\\
& = -\sum_{\beta_0=1}^n \sum_{j=0}^q \de{}{z^{\beta_0}} \bigl(
(-1)^{qj} \de{a_\nu}{\bar{z}^{\beta_j}}  \varphi_{\alpha_1\dots\alpha_p\bar{\beta}_{j+1}\dots\bar{\beta}_{j+q}}
\bigr) (x_0) \\
& = -\sum_{\beta_0=1}^n \sum_{j=0}^q (-1)^{qj} \bigl(
\de{^2a_\nu}{z^{\beta_0}\partial\bar{z}^{\beta_j}}\varphi_{\alpha_1\dots\alpha_p\bar{\beta}_{j+1}\dots\bar{\beta}_{j+q}}(x_0)+\\
& \quad\quad\quad\quad\quad\quad\quad\quad
+ \de{a_\nu}{\bar{z}^{\beta_j}}\de{\varphi_{\alpha_1\dots\alpha_p\bar{\beta}_{j+1}\dots\bar{\beta}_{j+q}}}{z^{\beta_0}}(x_0)
\bigr) .
\end{split}
\end{equation*}
This yields
\begin{equation*}
|\delbar^*(\delbar a_\nu\land\varphi)|_g(x_0) \le C\bigl(|\del\delbar a_\nu|_g(z_0)|\varphi|_g(x_0)+|\delbar a_\nu|_g(x_0)|\nabla\varphi|_g(x_0)\bigr),
\end{equation*}
where $C$ depends only on $n,p,q$. To prove it, we have to do some calculations. We set
\begin{equation*}
\gamma_{\beta_0j}:=\de{^2a_\nu}{z^{\beta_0}\partial\bar{z}^{\beta_j}}\varphi_{\alpha_1\dots\alpha_p\bar{\beta}_{j+1}\dots\bar{\beta}_{j+q}}
\text{ and }
\lambda_{\beta_0j}:=\de{a_\nu}{\bar{z}^{\beta_j}}\de{\varphi_{\alpha_1\dots\alpha_p\bar{\beta}_{j+1}\dots\bar{\beta}_{j+q}}}{z^{\beta_0}},
\end{equation*}
with $\alpha_1,\dots,\alpha_p,\beta_1,\dots,\beta_q=1,\dots,n$, $\beta_0=1,\dots,n$ and $j=0,\dots,q$.
So we have, using (\ref{nabla-normal}),

\begin{equation*}
\begin{split}
|\delbar^*(\delbar a_\nu\land\varphi)|_g^2(x_0) & =
\frac{1}{p!q!}\sum_{\substack{\alpha_1,\dots,\alpha_p,\\ \beta_1,\dots,\beta_q}} \sum_{\beta_0,\beta'_0=1}^n \sum_{j,j'=0}^q
(\gamma_{\beta_0j}+\lambda_{\beta_0j})(\bar{\gamma}_{\beta'_0j'}+\bar{\lambda}_{\beta'_0j'})(x_0) \\
& = \frac{1}{p!q!}\sum (\gamma_{\beta_0j}\bar{\gamma}_{\beta'_0j'}+\gamma_{\beta_0j}\bar{\lambda}_{\beta'_0j'}+\lambda_{\beta_0j}
\bar{\gamma}_{\beta'_0j'}+\lambda_{\beta_0j}\bar{\lambda}_{\beta'_0j'})(x_0) \\
& = \frac{1}{p!q!}\sum
(\real(\gamma_{\beta_0j}\bar{\gamma}_{\beta'_0j'})+2\real(\gamma_{\beta_0j}\bar{\lambda}_{\beta'_0j'})+ 
\real(\lambda_{\beta_0j}\bar{\lambda}_{\beta'_0j'}))(x_0) \\
& \le \frac{2}{p!q!}\sum (|\gamma_{\beta_0j}|^2+|\lambda_{\beta_0j}|^2)(x_0) \\
& \le C \bigl(|\del\delbar a_\nu|_g(x_0)|\varphi|_g(x_0)+|\delbar a_\nu|_g(x_0)|\nabla\varphi|_g(x_0)\bigr)^2.
\end{split}
\end{equation*}

To prove the other inequality in (\ref{lemma:delbar}), we want to estimate $|\delbar^*(\delbar a_\nu\land*\varphi)|_g(x_0)$, 
so the calculations are analogous except for the fact that we use the following formula (see \cite[p.94]{MK}) for the Hodge star operator:
\begin{equation*}
*\varphi=(i)^n(-1)^{\frac{1}{2}n(n-1)+qn}\sum_{A_p,B_q} \sgn A\sgn B \det(g_{h\bar{k}}) \bar{\varphi}^{\bar{B}_qA_p}
dz^{A_{n-p}}\land d\bar{z}^{B_{n-q}},
\end{equation*}
where $A$ and $B$ are respectively the permutations that send $(1,\dots,n)$ in $(A_p,A_{n-p})$ and in $(B_q,B_{n-q})$.
\end{proof}

\begin{proof}[Proof of Theorem \ref{teo:pezzi}]
First of all, by remark \ref{bc-kh-id}, we have 
$$\tilde\Delta_{BC}^g=\Delta^g_{\bar\partial}\Delta^g_{\bar\partial}+\delbar^*\delbar+\del^*\del.
$$
Thanks to the cut-off functions of the previous lemma, now we can integrate by part, using the Stokes Theorem.
\begin{equation*}
\begin{split}
0 & = \la\tilde{\Delta}^g_{BC}\varphi,a_\nu\varphi\ra \\
  & =\la\delbar\delbar^*\delbar\delbar^*\varphi,a_\nu\varphi\ra+\la\delbar^*\delbar\delbar^*\delbar\varphi,a_\nu\varphi\ra+
	\la\delbar^*\delbar\varphi,a_\nu\varphi\ra+\la\del^*\del\varphi,a_\nu\varphi\ra \\
  & =\la\delbar\delbar^*\varphi,\delbar\delbar^*(a_\nu\varphi)\ra+\la\delbar^*\delbar\varphi,\delbar^*\delbar(a_\nu\varphi)\ra+
	\la\delbar\varphi,\delbar(a_\nu\varphi)\ra+\la\del\varphi,\del(a_\nu\varphi)\ra
\end{split}
\end{equation*}
Now we calculate every differential in the right sides of the inner products. 
\begin{equation*}
\begin{split}
\delbar\delbar^*(a_\nu\varphi) & = -\delbar*(\delbar a_\nu\land*\varphi+a_\nu\delbar*\varphi) \\
  & =(-1)^{p+q}*\delbar^*(\delbar a_\nu\land*\varphi) +\delbar a_\nu\land \delbar^*\varphi+a_\nu\delbar\delbar^*\varphi \\
\delbar^*\delbar(a_\nu\varphi) & = \delbar^*(\delbar a_\nu\land\varphi+a_\nu\delbar\varphi) \\
  & =\delbar^*(\delbar a_\nu\land\varphi) -*(\delbar a_\nu\land *\delbar\varphi)+a_\nu\delbar^*\delbar\varphi \\
\delbar(a_\nu\varphi) & =\delbar a_\nu\land\varphi+a_\nu\delbar\varphi \\
\del(a_\nu\varphi) & =\del a_\nu\land\varphi+a_\nu\del\varphi
\end{split}
\end{equation*}
Therefore,
\begin{equation*}
0 = \la\tilde{\Delta}^g_{BC}\varphi,a_\nu\varphi\ra= I_1(\nu)+I_2(\nu),
\end{equation*}
where
\begin{equation*}
I_1(\nu)= \int_M a_\nu\bigl(|\delbar\delbar^*\varphi|_g^2+|\delbar^*\delbar\varphi|_g^2+
|\delbar\varphi|_g^2+|\del\varphi|_g^2\bigr)\,\vol,
\end{equation*}
and
\begin{equation*}
\begin{split}
I_2(\nu) & = \int_M \bigl(
\langle\delbar\delbar^*\varphi,(-1)^{p+q}*\delbar^*(\delbar a_\nu\land*\varphi) +\delbar a_\nu\land \delbar^*\varphi\rangle+\\
& + \langle\delbar^*\delbar\varphi,\delbar^*(\delbar a_\nu\land\varphi) -*(\delbar a_\nu\land *\delbar\varphi)\rangle+\\
& + \langle\delbar\varphi,\delbar a_\nu\land\varphi\rangle+\langle\del\varphi,\del a_\nu\land\varphi\rangle\bigr)\,\vol.
\end{split}
\end{equation*}
We have $I_1(\nu)=|I_2(\nu)|$ and, by the monotone convergence theorem, as $\nu\rightarrow\infty$,
\begin{equation*}
I_1(\nu)\rightarrow \int_M \bigl(|\delbar\delbar^*\varphi|_g^2+|\delbar^*\delbar\varphi|_g^2+
|\delbar\varphi|_g^2+|\del\varphi|_g^2\bigr)\,\vol.
\end{equation*}
Thus, if we show that $I_1(\nu)\rightarrow 0$ as $\nu\rightarrow\infty$, we have 
\begin{equation}\label{I1}
\del\varphi=0,\qquad \delbar\varphi=0,\qquad\delbar\delbar^*\varphi=0,\qquad\delbar^*\delbar\varphi=0.
\end{equation}
Estimating $|I_2(\nu)|$, we obtain
\begin{equation*}
\begin{split}
|I_2(\nu)| & \le \int_M \bigl(
   |\delbar\delbar^*\varphi|_g(|\delbar^*(\delbar a_\nu\land*\varphi)|_g+|\delbar a_\nu|_g|\delbar^*\varphi|_g)+\\
& +|\delbar^*\delbar\varphi|_g(|\delbar^*(\delbar a_\nu\land\varphi)|_g+ |\delbar a_\nu|_g|\delbar\varphi|_g)+\\
& +|\delbar\varphi|_g|\delbar a_\nu|_g|\varphi|_g+|\del\varphi|_g|\del a_\nu|_g|\varphi|_g\bigr)\,\vol.
\end{split}
\end{equation*}
By Lemma \ref{estimates} there exists a constant $C>0$ such that
\begin{equation*}
\begin{split}
&|\delbar^*(\delbar a_\nu\land*\varphi)|_g\le C\bigl(|\del\delbar a_\nu|_g|\varphi|_g+|\delbar a_\nu|_g|\nabla\varphi|_g\bigr),\\
&|\delbar^*(\delbar a_\nu\land\varphi)|_g \le C\bigl(|\del\delbar a_\nu|_g|\varphi|_g+|\delbar a_\nu|_g|\nabla\varphi|_g\bigr).
\end{split}
\end{equation*}
Therefore we have
\begin{equation*}
\begin{split}
|I_2(\nu)| & \le C \int_M \bigl(
   |\delbar\delbar^*\varphi|_g(|\del\delbar a_\nu|_g|\varphi|_g+|\delbar a_\nu|_g|\nabla\varphi|_g)+\\
& +|\delbar^*\delbar\varphi|_g(|\del\delbar a_\nu|_g|\varphi|_g+|\delbar a_\nu|_g|\nabla\varphi|_g)+\\
& +|\delbar\varphi|_g|\delbar a_\nu|_g|\varphi|_g+|\del\varphi|_g|\del a_\nu|_g|\varphi|_g\bigr)\,\vol.
\end{split}
\end{equation*}
The estimates on the cut-off functions, i.e., 
$$|\del a_\nu(x)|_g,|\delbar a_\nu(x)|_g,|\del\delbar a_\nu(x)|_g\le 2^{-\nu}C a_\nu(x)^\frac{1}{2},$$
yield
\begin{equation*}
\begin{split}
I_1(\nu)=|I_2(\nu)| & \le 2^{-\nu}C \int_M a_\nu(x)^\frac{1}{2} \bigl(
   |\delbar\delbar^*\varphi|_g(|\varphi|_g+|\nabla\varphi|_g)+\\
& +|\delbar^*\delbar\varphi|_g(|\varphi|_g+|\nabla\varphi|_g)+\\
& +|\delbar\varphi|_g|\varphi|_g+|\del\varphi|_g|\varphi|_g\bigr)\,\vol \\
& \le 2^{-\nu}C \int_M a_\nu(x)^\frac{1}{2}
        \bigl(|\delbar\delbar^*\varphi|_g+|\delbar^*\delbar\varphi|_g+|\delbar\varphi|_g+|\del\varphi|_g\bigr)\cdot \\
& \cdot \bigl(|\varphi|_g+|\nabla\varphi|_g\bigr)\,\vol \\
& \le 2^{-\nu}C \biggl( \int_M a_\nu \bigl(
|\delbar\delbar^*\varphi|_g+|\delbar^*\delbar\varphi|_g+|\delbar\varphi|_g+|\del\varphi|_g\bigr)^2\,\vol\biggr)^{\frac{1}{2}}\cdot\\
& \cdot \biggl( \int_M \bigl(|\varphi|_g+|\nabla\varphi|_g\bigr)^2\,\vol\biggr)^{\frac{1}{2}} \\
& \le 2^{-\nu}C\bigl(I_1(\nu)\bigr)^{\frac{1}{2}}\cdot \bigl(||\varphi||_{L_2}+||\nabla\varphi||_{L_2}\bigr),
\end{split}
\end{equation*}
and consequently
$$
\bigl(I_1(\nu)\bigr)^{\frac{1}{2}}\leq C\,2^{-\nu}.
$$
Thus $\varphi$ is $\del$-closed and $\delbar$-closed. This implies 
$$\tilde{\Delta}_{BC}^g\varphi=\del\delbar\delbar^*\del^*\varphi.$$
Now we substantially reapply the argument as above to this form of the Bott-Chern Laplacian. We have
\begin{equation*}
0 = \la\tilde{\triangle}_{BC}\varphi,a_\nu\varphi\ra = \la\del\delbar\delbar^*\del^*\varphi,a_\nu\varphi\ra = 
\la\delbar^*\del^*\varphi,\delbar^*\del^*(a_\nu\varphi)\ra,
\end{equation*}
and
\begin{equation*}
\begin{split}
\delbar^*\del^*(a_\nu\varphi) & = \delbar^*(-*(\del a_\nu\land*\varphi)+a_\nu\del^*\varphi) \\
  & =(-1)^{p+q-1}*(\delbar\del a_\nu\land*\varphi) -(-1)^{p+q-1}*(\del a_\nu\land\delbar*\varphi)+\\
	& -*(\delbar a_\nu\land*\del^*\varphi)+a_\nu\delbar^*\del^*\varphi,
\end{split}
\end{equation*}
thus
\begin{equation*}
0 = \la\tilde{\Delta}^g_{BC}\varphi,a_\nu\varphi\ra= I'_1(\nu)+I'_2(\nu),
\end{equation*}
where
\begin{equation*}
I'_1(\nu)= \int_M a_\nu|\delbar^*\del^*\varphi|_g^2\,\vol,
\end{equation*}
and
\begin{equation*}
\begin{split}
|I'_2(\nu)| & \le 2^{-\nu}C \int_M a_\nu(x)^\frac{1}{2}
|\delbar^*\del^*\varphi|_g(|\varphi|_g+|\delbar^*\varphi|_g+|\del^*\varphi|_g)\,\vol \\
& \le 2^{-\nu}C \bigl(I'_1(\nu)\bigr)^{\frac{1}{2}}\cdot \bigl(||\varphi||_{L_2}+||\nabla\varphi||_{L_2}\bigr).
\end{split}
\end{equation*}
Thus we also have $\delbar^*\del^*\varphi=0$. This ends the proof.
\end{proof}
\begin{rem}
From Theorem \ref{teo:pezzi} we immediately obtain that 
$$
\varphi\in \mathcal{H}^{p,q}_{BC,2}\quad\hbox{\rm if and only if}\qquad \varphi\in W^{1,2}(M),\qquad \del\varphi=0,\qquad \delbar\varphi=0,
\qquad \delbar^*\del^*\varphi=0,
$$
which ``extends'' the characterization of the space of Bott-Chern harmonic forms on a compact Hermitian manifold to any $d$-bounded Stein manifold.
\end{rem}


As a straightforward consequence of Theorem \ref{teo:pezzi}, we obtain the following

\begin{theorem}
\label{teo:harmonic}
Let $M$ be a d-bounded Stein manifold of complex dimension $n$. Then
\begin{equation*}
\mathcal{H}^{p,q}_{BC,2}\subset\mathcal{H}^{p,q}_{\delbar,2}=\mathcal{H}^{p,q}_{\del,2}=\mathcal{H}^{p,q}_{d,2},
\end{equation*}
where the last three sets are the spaces of $L^2$-harmonic $(p,q)$-forms with respect to $\Delta^g_{\delbar}$, $\Delta^g_{\del}$ and $\Delta^g_{d}$.
\end{theorem}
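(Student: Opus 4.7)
The plan is to split the theorem into two independent parts. The inclusion $\mathcal{H}^{p,q}_{BC,2}\subset\mathcal{H}^{p,q}_{\bar\partial,2}$ will follow from Theorem \ref{teo:pezzi} together with Remark \ref{bc-kh-id}, while the triple equality $\mathcal{H}^{p,q}_{\bar\partial,2}=\mathcal{H}^{p,q}_{\partial,2}=\mathcal{H}^{p,q}_{d,2}$ is a purely pointwise consequence of the standard K\"ahler identities and requires no further analysis on $M$.

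For the first inclusion, start with $\varphi\in\mathcal{H}^{p,q}_{BC,2}$. Since $\varphi$ is smooth and belongs to $W^{1,2}(M)\subset L^2(M)$, it suffices to establish the pointwise identity $\Delta^g_{\bar\partial}\varphi=0$. Theorem \ref{teo:pezzi} immediately supplies $\partial\varphi=\bar\partial\varphi=\bar\partial^*\partial^*\varphi=0$, and hence also $\bar\partial^*\bar\partial\varphi=0$. What is still needed is $\bar\partial\bar\partial^*\varphi=0$, so that
$$\Delta^g_{\bar\partial}\varphi=\bar\partial\bar\partial^*\varphi+\bar\partial^*\bar\partial\varphi=0.$$
The cleanest way to obtain this is to observe that the proof of Theorem \ref{teo:pezzi} actually establishes more than its statement: the monotone-convergence step showing $I_1(\nu)\to 0$ forces the integral of $|\bar\partial\bar\partial^*\varphi|_g^2+|\bar\partial^*\bar\partial\varphi|_g^2+|\bar\partial\varphi|_g^2+|\partial\varphi|_g^2$ against $\vol$ to vanish, yielding in particular $\bar\partial\bar\partial^*\varphi=0$ pointwise. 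Thus $\Delta^g_{\bar\partial}\varphi=0$ and $\varphi\in\mathcal{H}^{p,q}_{\bar\partial,2}$.

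For the three-way equality, since $g$ is K\"ahler the usual K\"ahler identities provide the pointwise operator relation $\Delta^g_d=2\Delta^g_{\bar\partial}=2\Delta^g_\partial$ on smooth forms of every bidegree, so a smooth $L^2$ form is annihilated by one of these Laplacians if and only if it is annihilated by the other two; this is standard and does not invoke the $d$-bounded Stein hypothesis. The main subtlety in the whole argument is the first inclusion: from the literal statement of Theorem \ref{teo:pezzi} one can only deduce $(\Delta^g_{\bar\partial})^2\varphi=0$ via Remark \ref{bc-kh-id}, and upgrading this to $\Delta^g_{\bar\partial}\varphi=0$ would force one to rerun the cut-off integration-by-parts machinery of Section \ref{pezzi}. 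Harvesting the intermediate vanishing $\bar\partial\bar\partial^*\varphi=0$ already produced inside the proof of Theorem \ref{teo:pezzi} neatly sidesteps this duplication and makes the corollary genuinely immediate.
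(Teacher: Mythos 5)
Your proof is correct and follows essentially the same route as the paper: the paper likewise disposes of the triple equality via the K\"ahler identity $\Delta^g_d=2\Delta^g_{\del}=2\Delta^g_{\delbar}$, and for the inclusion it cites precisely the intermediate vanishing (\ref{I1}) from inside the proof of Theorem \ref{teo:pezzi} (namely $\delbar\delbar^*\varphi=0$ and $\delbar^*\delbar\varphi=0$) rather than the theorem's literal statement. Your closing observation --- that the statement alone would only yield $(\Delta^g_{\delbar})^2\varphi=0$, so one must harvest (\ref{I1}) to avoid rerunning the cut-off argument --- is exactly the reason the paper's proof references (\ref{I1}).
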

\begin{proof}
We note that the last three equalities of the thesis hold because $M$ is K\"ahler, in fact $\Delta^g_d=2\Delta^g_{\del}=2\Delta^g_{\delbar}$ by K\"ahler identities. Therefore it is enough to prove $\mathcal{H}^{p,q}_{BC,2}\subset\mathcal{H}^{p,q}_{\delbar,2}$.

Let $\varphi\in\mathcal{H}^{p,q}_{BC,2}$; from (\ref{I1}) in the proof of Theorem 
\ref{teo:pezzi} we have 
$$\delbar\delbar^*\varphi=0,\quad \delbar^*\delbar\varphi=0.
$$ 
Thus $\Delta^g_{\bar\partial}\varphi=0$ and $\varphi\in\mathcal{H}^{p,q}_{\delbar,2}$.
\end{proof}
We are ready to prove the following 
\begin{theorem}\label{GromovBC}
Let $M$ be a $d$-bounded Stein manifold of complex dimension $n$. Then 
$$\mathcal{H}^{p,q}_{BC,2}=\{0\},\quad \hbox{for}\,\, p+q\neq n.
$$
\end{theorem}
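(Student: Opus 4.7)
The strategy is short: combine Theorem \ref{teo:harmonic} with Gromov's vanishing theorem for $L^2$ harmonic forms on $d$-bounded complete K\"ahler manifolds. All the genuinely new analytic work for our non-compact setting has already been carried out in Theorems \ref{teo:pezzi} and \ref{teo:harmonic}.

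Let $\varphi \in \mathcal{H}^{p,q}_{BC,2}$. By Theorem \ref{teo:harmonic} we have $\varphi \in \mathcal{H}^{p,q}_{d,2}$, so $\varphi$ is an $L^2$ $(p,q)$-form with $\Delta^g_d\varphi = 0$. Since every $d$-bounded Stein manifold is complete (recalled at the start of Section \ref{cut-off}), the standard cut-off argument (integrating $\langle \Delta^g_d\varphi,\,a_\nu^2\varphi\rangle$ by parts and letting $\nu\to\infty$) upgrades $\Delta^g_d\varphi = 0$ to $d\varphi = 0$ and $d^*\varphi = 0$. Thus $\varphi$ is an $L^2$-harmonic form of pure de Rham degree $r = p+q$.

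I would then invoke Gromov's vanishing theorem \cite{G} (see also \cite[Chap.~VIII]{DE}): on a complete K\"ahler manifold with $\omega = d\eta$ and $\eta$ bounded, every $L^2$-harmonic form of pure degree $r\neq n$ vanishes. Our hypotheses supply exactly what is needed, namely $\omega = d(i\delbar\rho)$ with $i\delbar\rho$ bounded by the very definition of a $d$-bounded Stein manifold, so the theorem applies. For $p+q\neq n$ it yields $\varphi = 0$, which is the desired conclusion $\mathcal{H}^{p,q}_{BC,2}=\{0\}$.

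The only point requiring care is that the vanishing half of Gromov's theorem does not require the uniform discrete subgroup of isometries that appears in the full K\"ahler-hyperbolic statement quoted in the introduction; that assumption is used only to ensure non-vanishing in the middle degree. The vanishing in non-middle degrees is a Lefschetz-plus-integration-by-parts argument using only completeness, the K\"ahler condition, and $d$-boundedness of $\omega$: for a primitive $L^2$-harmonic $r$-form $\alpha$ with $r<n$ one expresses $*\alpha$ (up to sign and factorials) via $\omega^{n-r}\wedge\alpha$ and uses $\omega^{n-r} = d(\eta\wedge\omega^{n-r-1})$ to exhibit $\alpha$ essentially as an $L^2$ exact form, then integrates by parts against a sequence of cut-offs (such as the one from Lemma \ref{estimates-derivatives}) to show $\|\alpha\|_{L^2}^2 = 0$. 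This is the step to pin down carefully, but it is precisely the content of \cite[Chap.~VIII]{DE}, which our setting satisfies; combined with the previous two paragraphs, the theorem follows.
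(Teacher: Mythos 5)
Your proposal is correct and follows essentially the same route as the paper: reduce via Theorem \ref{teo:harmonic} to showing $\mathcal{H}^{p,q}_{d,2}=\{0\}$, then apply the vanishing half of Gromov's theorem \cite[Thm.~1.2.B]{G}, whose Lefschetz argument (using $\omega^k=d(\eta\wedge\omega^{k-1})$ with $\eta$ bounded, the injectivity of $L^k$, and the $L^2$ Hodge decomposition on the complete manifold) the paper also reproduces for completeness. Your remark that the uniform discrete group of isometries is not needed for the vanishing part is accurate and consistent with how the paper uses Gromov's result.
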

\begin{proof} By Theorem \ref{teo:harmonic}, it is enough to prove that 
$\mathcal{H}^{p,q}_{d,2}=\{0\}$.
This last fact is a consequence of \cite[Thm.1.2.B.]{G}. For the sake of completeness we remind the argument by Gromov. 

Let us consider the Lefschetz operator 
$$
L:\Omega^{(p,q)}(M)\lr\Omega^{(p+1,q+1)}(M)
$$ defined by 
$$L\varphi=\omega\land\varphi
$$
for every $\varphi\in\Omega^{(p,q)}(M)$. By \cite[rem.3.2.7.iii)]{H}, the map
\begin{equation*}
L^{n-p-q}:\mathcal{H}^{p,q}_{d}\lr\mathcal{H}^{n-q,n-p}_{d}
\end{equation*}
is an isomorphism for $p+q\le n$, where $\mathcal{H}^{p,q}_{d}$ denotes the space of 
$\Delta^g_{d}$-harmonic $(p,q)$-forms. Now set $k=n-p-q$ and consider the form $L^k\varphi$, where $\varphi\in\Omega^{(p,q)}(M)\cap L^2(M)$ 
is a $d$-closed form. Since $\omega=d\eta$, if $k>0$, then we get
\begin{equation*}
L^k\varphi=\omega^k\land\varphi=(d\eta)^k\land\varphi=d(\eta\land(d\eta)^{k-1}\land\varphi).
\end{equation*}
Furthermore, 
\begin{itemize}
\item $\eta\land(d\eta)^{k-1}$ is bounded, since $\eta$ is bounded and $|d\eta|_g$ is constant;
\item $\eta\land(d\eta)^{k-1}\land\varphi\in L^2(M)$, since $\varphi\in L^2(M)$.
\end{itemize}

Moreover, if $\varphi\in\Omega^{(p,q)}(M)\cap L^2(M)$ is a $\Delta^g_d$-harmonic form, 
then $L^k\varphi$ is also $\Delta^g_d$-harmonic. Thus, in view of the $L^2$ 
Hodge decomposition theorem (see \cite[Chap.VIII, Thm.3.2]{DE}), we obtain that $L^k\varphi=0$.

Now let $\varphi\in\mathcal{H}^{p,q}_{d,2}$. If $p+q<n$, then $k>0$ and $L^k\varphi=0$; therefore $\varphi=0$ since $L^k$ is injective.
Conversely, if $p+q>n$, then $*\varphi\in\mathcal{H}^{n-p,n-q}_{d,2}$ and $(n-p)+(n-q)<n$; by the previous argument $*\varphi=0$ 
and consequently $\varphi=0$. 

Summing up, we showed that $\mathcal{H}^{p,q}_{d,2}=\{0\}$ for $p+q\ne n$. This ends the proof.
\end{proof}
\begin{rem}
The Lefschetz argument, in the proof of Theorem \ref{GromovBC}, uses only the K\"ahler and the $d$-bounded assumptions.
\end{rem}
\begin{rem} By  similar computations, Theorem \ref{GromovBC} can be stated and proved also for the Aeppli Laplacian. 
Indeed, it is sufficient to repeat the proof of Theorem \ref{teo:pezzi} 
with $\tilde\Delta_{A}^g=\Delta^g_{\bar\partial}\Delta^g_{\bar\partial}+\del\del^*+\delbar\delbar^*$.
\end{rem}

\end{document}